\numberwithin{equation}{section}
\def\hangbox to #1 #2{\vskip3pt\hangindent #1\noindent \hbox to #1{#2}$\!\!$}
\newtheorem{thm}{Theorem}[section]
\newtheorem{lem}[thm]{Lemma}
\newtheorem{cor}[thm]{Corollary}
\newtheorem{prop}[thm]{Proposition}
\theoremstyle{definition}
\theoremstyle{remark}
\def\N{{\mathbb N}}
\def\R{{\mathbb R}}
\newcommand{\trivert}{|\!|\!|}
\def\sfrac#1#2{\kern.1em\raise.5ex\hbox{$#1$}
        \kern-.1em/\kern-.05em\lower.25ex\hbox{$#2$}}
\def\vp{\varepsilon}
\def\Id{\operatorname{Id}}
\newcommand{\kle}{\!<\!}
\newcommand\kin{\!\in\!}
\newcommand{\fw}{\text{\fw}}
\def\cF{{\mathcal F}}
\begin{document}
\allowdisplaybreaks
\title{Equilateral sets in uniformly smooth Banach spaces.}
\author{D. Freeman}
\author{E. Odell}
\thanks{Edward Odell (1947-2013). The author passed
away during the production of this paper.} \author{B. Sari}
\author{Th. Schlumprecht}

\address{Department of Mathematics and Computer Science\\
Saint Louis University , St Louis, MO 63103  USA}
\email{dfreema7@slu.edu}

\address{Department of Mathematics\\ The University of Texas at Austin, Austin, TX 78712-0257}
\email{odell@math.utexas.edu}

\address{Department of Mathematics, University of North Texas, Denton,
TX 76203-5017} \email{bunyamin@unt.edu}

\address{Department of Mathematics, Texas A\&M University, College Station,
TX 77843-3368} \email{schlump@math.tamu.edu}

\thanks{Research of the first, second, and fourth author was supported by the
  National Science Foundation.}
  \thanks{ Research of the third author was
  supported by the Simons Foundation.
} \subjclass[2000]{46B20, 46B04}

\begin{abstract}
Let $X$ be an infinite dimensional uniformly smooth Banach space. We
prove that $X$ contains an infinite equilateral set. That is, there
exists a constant $\lambda>0$ and an infinite sequence
$(x_i)_{i=1}^\infty\subset X$ such that $\|x_i-x_j\|=\lambda$ for
all $i\neq j$.
\end{abstract}

\maketitle


\section{Introduction}\label{S:0}

A subset $S$ of a Banach space $X$ is called equilateral if there
exists a constant $\lambda>0$ such that $\|x-y\|=\lambda$ for all
$x,y\in S$ with $x\neq y$.  Much of the research on equilateral sets
in Banach spaces is in estimating the maximal size of equilateral
sets in finite dimensional Banach spaces, for some examples see
\cite{AP},\cite{MV},\cite{P},\cite{S}, and \cite{SV}.
Much less research has been done on equilateral sets in infinite
dimensional Banach spaces. Instead of estimating the maximal size of
equilateral sets in finite dimensional spaces, we consider the
question of whether or not an infinite equilateral set exists in
some given infinite dimensional Banach space.  That is, given an
infinite dimensional Banach space $X$, does there exist a sequence
$(x_n)_{n=1}^\infty\subset X$ and a constant $\lambda>0$ such that
$\|x_n-x_m\|=\lambda$ for all $n\neq m$?  For example, any
subsymmetric basis is equilateral, such as the unit vector basis for
$\ell_p$ for all $1\leq p<\infty$ or the unit vector basis for
Schlumprecht's space. On the other hand, the unit vector bases for
Tsirelson's space and the hereditarily indecomposable Gowers-Maurey
space are not subsymmetric, and yet they each have equilateral
subsequences. Whether or not a given infinite dimensional Banach
space contains an equilateral sequence is an isometric property.
That is, it is possible for two infinite dimensional Banach spaces
to be linearly isomorphic, and yet only one of them contain an
equilateral sequence. Indeed, Terenzi constructed an equivalent norm
$\trivert\cdot\trivert$ on $\ell_1$ such that the Banach space
$(\ell_1,\trivert\cdot\trivert)$ does not contain an equilateral
sequence \cite{T1},\cite{T2}.  Terenzi gave two distinct renormings
of $\ell_1$ which do not contain an equilateral sequence, and these
are the only known infinite dimensional Banach spaces which do not
contain an equilateral sequence.  However, every renorming of $c_0$
does contain an equilateral sequence \cite{MV}.  Taken together,
these two results are somewhat surprising as both $\ell_1$ and $c_0$
are not distortable.  We show that every uniformly smooth infinite
dimensional Banach space contains an equilateral sequence.

\section{Asymptotic stability}\label{S:1}

Given a uniformly smooth Banach space $X$, before we can construct
an equilateral sequence in $X$, we will need to first construct a
sequence which is very close to being equilateral in certain ways.
 In this section we show how certain properties of weakly null
sequences can be stabilized to make them ``almost equilateral''.

Let $X$ be a uniformly smooth Banach space.  For all $x\in
X\setminus \{0\}$, there exists a unique functional $\phi_x\in
S_{X^*}$ such that $\phi_x(x)=\|x\|$. Furthermore, the map
$\Phi:X\setminus\{0\}\rightarrow S_{X^*}$ given by $\Phi(x)= \phi_x$
is uniformly continuous on subsets of $X$ which are bounded away
from 0. 
The following lemma allows us to choose a sequence which is
asymptotically equilateral. Note that if $X$ is a uniformly smooth
Banach space and $(x_i)_{i=1}^\infty\subset S_X$ is a normalized
weakly null sequence then $(x_i)_{i=1}^\infty$ has a subsequence
which generates a spreading model $(e_i)_{i=1}^\infty$ with
$\|e_1-e_2\|>1$.
 We recall that the {\em  spreading  model}  generated by  a semi
normalized sequence $(x_i)_{i=1}^\infty$ in a Banach space $X$ is a
Banach space $(E,\|\cdot\|)$ with a basis $(e_i)_{i=1}^\infty$
satisfying
$$\Big\|\sum_{i=1}^n a_i e_i\Big\|=\lim_{k_1\to\infty }\lim_{k_2\to\infty }\ldots \lim_{k_n\to\infty }\Big\|\sum_{i=1}^n a_i x_{k_i}\Big\|,
\text{ for all $n\in\N$ and scalars $(a_i)_{i=1}^n$.}$$
\begin{lem}\label{L:1}
Let $X$ be an infinite dimensional Banach space, and let
$(x_i)_{i=1}^\infty\subset S_X$ be a normalized weakly null sequence
with a spreading model $(e_i)_{i=1}^\infty$ such that
$\|e_1-e_2\|=\lambda>1$. There exists a subsequence
$(y_i)_{i=1}^\infty$ of $(x_i)_{i=1}^\infty$ and a sequence of
scalars $(a_i)_{i=1}^\infty\subset \R$ such that $a_i\rightarrow 1$,
and $\lim_{i\rightarrow\infty}\|a_k y_k- a_i y_i\|=\lambda$ for all
$k\in\N$.
\end{lem}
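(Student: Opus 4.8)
The plan is to first pass to a subsequence along which the distances $\|a y_k - y_i\|$ stabilize as $i\to\infty$ for every scalar $a$, and then to correct each $y_k$ by a scalar $a_k\approx 1$ chosen so that the stabilized distance equals $\lambda$. Since every subsequence of $(x_i)$ generates the same spreading model, a diagonal argument produces a subsequence $(y_i)$ of $(x_i)$ such that $\lim_{i\to\infty}\|q y_k - y_i\|$ exists for every $k\in\N$ and every rational $q$. For fixed $k$ the functions $a\mapsto\|a y_k - y_i\|$ are convex and $1$-Lipschitz (because $\|y_k\|=1$), so convergence on a dense set of scalars upgrades to convergence at every real $a$; set $f_k(a):=\lim_{i\to\infty}\|a y_k - y_i\|$. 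Each $f_k$ is then convex and $1$-Lipschitz, with $f_k(0)=\lim_i\|y_i\|=1$, with $f_k(a)\ge a-1$ for $a\ge 0$ by the triangle inequality, and with $f_k(1)=\mu_k$ where $\mu_k\to\lambda$ as $k\to\infty$ --- the last fact being exactly the spreading model identity $\|e_1-e_2\|=\lambda$ read along the subsequence $(y_i)$.

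Next, for each $k$ I would invoke the intermediate value theorem: $f_k$ is continuous, $f_k(0)=1<\lambda$, and $f_k(a)\ge a-1\to\infty$, so there exists $a_k>0$ with $f_k(a_k)=\lambda$. When $\mu_k\ge\lambda$ I choose such an $a_k$ in $[0,1]$, and convexity gives $\lambda=f_k(a_k)\le(1-a_k)f_k(0)+a_k f_k(1)=1+a_k(\mu_k-1)$, whence $a_k\ge(\lambda-1)/(\mu_k-1)$; since also $a_k\le 1$ and $(\lambda-1)/(\mu_k-1)\to 1$, we get $a_k\to 1$. When $\mu_k\le\lambda$ I choose $a_k\ge 1$, and for $k$ large enough that $\mu_k>1$ the monotonicity of the difference quotients of the convex function $f_k$ gives $\mu_k-1=\frac{f_k(1)-f_k(0)}{1-0}\le\frac{f_k(a_k)-f_k(1)}{a_k-1}$, so $0\le a_k-1\le(\lambda-\mu_k)/(\mu_k-1)\to 0$. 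Either way $a_k\to 1$ (the finitely many initial indices with $\mu_k\le 1$ may be assigned any scalar with $f_k(a_k)=\lambda$).

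Finally I would verify that $(y_i)$ and $(a_i)$ meet the requirements. We have $a_i\to 1$ by construction, and for every fixed $k$, since $\|y_i\|=1$,
\[
\bigl|\,\|a_k y_k - a_i y_i\|-\|a_k y_k - y_i\|\,\bigr|\le|a_i-1|\to 0,
\]
so $\lim_{i\to\infty}\|a_k y_k - a_i y_i\|=\lim_{i\to\infty}\|a_k y_k - y_i\|=f_k(a_k)=\lambda$, as desired. I expect the crux to be the middle step: mere continuity of the functions $f_k$ would only yield the existence of correcting scalars, whereas to force $a_k\to 1$ one genuinely needs their convexity together with the convergence $\mu_k\to\lambda$ supplied by the spreading model. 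The diagonal extraction making the $f_k$ well defined is routine, and --- consistently with the hypotheses of the lemma --- no smoothness of $X$ enters the argument.
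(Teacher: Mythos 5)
Your proof is correct, and the route is genuinely different from the paper's, though both share the same skeleton: pass to a subsequence so the stabilized distances $f_k(a)=\lim_i\|a y_k - y_i\|$ exist, split into the cases $\mu_k\ge\lambda$ and $\mu_k\le\lambda$, use the intermediate value theorem to solve $f_k(a_k)=\lambda$, and then argue $a_k\to 1$. The difference is entirely in how $a_k\to 1$ is extracted. The paper introduces Hahn--Banach norming functionals $\phi_{x_n-x_m}$, shows $\phi_{x_n-x_m}(x_n)>\vp$ from the lower bound $\|x_n-x_m\|>1+\vp$, and uses this linear lower estimate to force $\lim_m\|\bar a_n x_n - x_m\|\ge\lambda$ (case $\lambda_n<\lambda$, yielding $1<a_n\le\bar a_n\to 1$), respectively $\lambda\le\lambda_n-\vp(1/a_n-1)$ (case $\lambda_n>\lambda$, forcing $a_n\to 1$). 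You instead work directly with the convex $1$-Lipschitz functions $f_k$: Jensen's inequality gives $\lambda=f_k(a_k)\le 1+a_k(\mu_k-1)$ in the first case, and monotonicity of difference quotients gives $a_k-1\le(\lambda-\mu_k)/(\mu_k-1)$ in the second. Since a norming functional is exactly a subgradient of the norm, the two arguments are morally identical, but yours dispenses with functionals and Hahn--Banach entirely and is cleaner; you are also more careful than the paper about the extraction step that makes $\lim_i\|a y_k-y_i\|$ exist for all real $a$ (the paper asserts this ``by the definition of spreading model'', which elides the diagonalization and the density/convexity upgrade that you spell out). One thing both proofs silently use and you could state: a subsequence of a sequence generating a spreading model generates the same spreading model, so $\mu_k\to\lambda$ persists after your diagonal extraction.
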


\begin{proof}

For all $x\in X$, we let $\phi_x\in S_X$ be a functional such that
$\phi_x(x)=\|x\|$. We have that
$\lim_{n\rightarrow\infty}\lim_{m\rightarrow\infty}\|x_n-x_m\|=\|e_1-e_2\|=\lambda>1$.
Let $\vp>0$ be chosen so that $\lambda>1+\vp$. By passing to a
 subsequence of $(x_i)_{i=1}^\infty$, we may assume for all $n\in\N$ that
$\lambda_n:=\lim_{m\rightarrow\infty}\|x_n-x_m\|>1+\vp$. Moreover,
we may assume that $\|x_n-x_m\|>1+\vp$ for all $n,m\in\N$. If
$\lambda_n=\lambda$ for all $n\in\N$ then setting $a_n=1$ for all
$n\in\N$ gives us our desired sequence. Thus, after passing to a
subsequence again, we may assume that either $\lambda_n<\lambda$ for
all $n\in\N$ or that $\lambda_n>\lambda$ for all $n\in\N$.

We first consider the case that $\lambda_n<\lambda$ for all
$n\in\N$. We have for all $n,m\in\N$ that $\|x_n\|=\|x_m\|=1$,
$\|\phi_{x_n-x_m}\|=1$, and
$\phi_{x_n-x_m}(x_n-x_m)=\|x_n-x_m\|>1+\vp$.  Thus,
$\phi_{x_n-x_m}(x_n)>\vp$ for all $n,m\in\N$. Let
$\overline{a}_n=1+(\lambda-\lambda_n)/\vp$.  Thus
$\overline{a}_n\rightarrow 1$.   By the definition of spreading
model, we have that $\lim_{m\rightarrow\infty}\|a x_n- x_m\|$ exists
for all $n\in\N$ and $0\leq a \leq \overline{a}_n$. We have that,
\begin{align*}
\lim_{m\to\infty}\|\overline{a}_n x_n-x_m\|&\ge \lim_{m\rightarrow\infty}\phi_{x_n-x_m}(\overline{a}_n x_n-x_m)\\
&= (\overline{a}_n-1)\lim_{m\rightarrow\infty}\phi_{x_n-x_m}(x_n)+
\lim_{m\rightarrow\infty}\phi_{x_n-x_m}(x_n-x_m)\\
&\geq
 \lambda-\lambda_n+\lambda_n=\lambda.
\end{align*}
 Thus, for all $n\in\N$, we have that $\lim_{m\to\infty}\|x_n-x_m\|=\lambda_n<\lambda\leq\lim_{m\to\infty}\|\overline{a}_n
 x_n-x_m\|$.
Hence, we may choose by the Intermediate Value Theorem, applied to
the function $a\mapsto \lim_{m\to\infty } \|ax_n-x_m\|$,
 a constant $1< a_n\leq
\overline{a}_n$  to yield $\lim_{m\rightarrow\infty}\|a_n
x_n-x_m\|=\lambda$. As $\overline{a}_n\rightarrow 1$, we have that
$a_n\rightarrow 1$, and hence $\lim_{m\rightarrow\infty}\|a_n
x_n-a_m x_m\|=\lim_{m\rightarrow\infty}\|a_n x_n-x_m\|=\lambda$ for
all $n\in\N$.

We now consider the case that $\lambda_n>\lambda$ for all $n\in\N$.
By the  definition of spreading models $\lim_{m\rightarrow\infty}\|a
x_n- x_m\|$ exists for all $n\in\N$ and $0\leq a \leq 1$. As
$\|x_m\|= \|0\cdot x_n-x_m\|=1$ and $\lim
_{m\rightarrow\infty}\|x_n-x_m\|>\lambda$,   there exist by the
Intermediate Value Theorem $0<a_n<1$ so that $\lim
_{m\rightarrow\infty}\|a_n x_n-x_m\|=\lambda>1+\vp$.  After passing
to a subsequence of $(x_i)_{i=1}^\infty$, we may assume that
$\|a_nx_n-x_m\|>1+\vp$ for all $m,n\in\N$.

Since  for all $m, n\in\N$ we have  $\|x_n\|=\|x_m\|=1$ and
$\|a_nx_n-x_m\|>1+\vp$, it follows that $\phi_{a_nx_n-x_m}(x_n) >
\vp/{a_n}$, and, thus,
\begin{align*}
\lambda=&\lim_{m\rightarrow\infty}\|a_n x_n-x_m\|\\
=& \lim_{m\rightarrow\infty} \phi_{a_n x_n-x_m}(a_n
x_n-x_m)\\
=&\lim_{m\rightarrow\infty}\phi_{a_n
x_n-x_m}(x_n-x_m)-(1-a_n)\phi_{a_n x_n-x_m}(x_n)\\
 \leq& \lim_{m\to\infty} \|x_n-x_m\|- (1-a_n)\vp/{a_n}= \lambda_n -\vp  (1/{a_n}-1).
\end{align*}
Since $\lambda =\lim_{n\to\infty} \lambda_n$ and $0<a_n<1$, for
$n\in\N$,   it follows that $a_n\rightarrow 1$.  Hence,
$\lim_{m\rightarrow\infty}\|a_n x_n-a_m
x_m\|=\lim_{m\rightarrow\infty}\|a_n x_n-x_m\|=\lambda$ for all
$n\in\N$.

\end{proof}

 By perturbing the asymptotically equilateral
sequence given by Lemma \ref{L:1} and passing to a subsequence, we
obtain the following.

\begin{lem}\label{L:2}
Let $X$ be an infinite dimensional uniformly smooth Banach space,
and let $(x_i)_{i=1}^\infty\subset X$ be a semi-normalized weakly
null sequence. There exists a weakly null block sequence
$(z_i)_{i=1}^\infty$ of $(x_i)_{i=1}^\infty$ with
$\lim_{i\rightarrow\infty}\|z_i\|=1$ and a constant $\lambda>1$ such
that $\lim_{i\rightarrow\infty}\|z_k- z_i\|=\lambda$ for all
$k\in\N$ and
$\lim_{k\rightarrow\infty}\lim_{i\rightarrow\infty}\phi_{z_k-z_i}(z_\ell)=0$
for all $\ell\in\N$.
\end{lem}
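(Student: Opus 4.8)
The plan is to reduce to a normalized weakly null sequence, apply Lemma~\ref{L:1} to obtain an almost-equilateral block sequence, and then pass to a subsequence (and a small perturbation, as in the statement) to arrange the condition on the norming functionals; all the real content sits in this last step.

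First, passing to a subsequence we may assume $\|x_i\|\to c>0$, so $u_i:=x_i/\|x_i\|$ is normalized weakly null; by the remark preceding Lemma~\ref{L:1} (which uses uniform smoothness) we may, after a further subsequence, assume $(u_i)$ has a spreading model $(e_i)$ with $\|e_1-e_2\|=\lambda>1$. Applying Lemma~\ref{L:1} to $(u_i)$ gives a subsequence $(y_i)$ and scalars $a_i\to1$ with $\lim_i\|a_ky_k-a_iy_i\|=\lambda$ for all $k$. Put $z_i:=a_iy_i$: this is a block sequence of $(x_i)$ (each $z_i$ being a scalar multiple of a single $x_{n_i}$), it is weakly null, $\|z_i\|=a_i\to1$, and $\lim_i\|z_k-z_i\|=\lambda>1$ for every $k$; so $(z_i)$ already satisfies every conclusion except $\lim_k\lim_i\phi_{z_k-z_i}(z_\ell)=0$.

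Next, since $X$ is uniformly smooth it is reflexive; passing to the closed linear span of $(x_i)$ we may assume $X$ is separable and reflexive (this does not change the functionals $\phi_{z_k-z_i}$, the subspace being uniformly smooth as well), so $B_{X^*}$ is weak$^*$ sequentially compact. A diagonal argument produces a subsequence, still denoted $(z_i)$, with $\psi_k:=w^*\text{-}\lim_i\phi_{z_k-z_i}$ existing for each $k$ and, after one more subsequence, $\psi:=w^*\text{-}\lim_k\psi_k$ existing; then $\lim_k\lim_i\phi_{z_k-z_i}(z_\ell)=\psi(z_\ell)$, so it remains to show $\psi(z_\ell)=0$ for all $\ell$. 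Introduce $M(v):=\lim_k\lim_i\|z_k-z_i+v\|$ (existence arranged by a further diagonalization over a countable dense set of $v$), a convex $1$-Lipschitz function with $M(0)=\lambda$. By the quantitative smoothness estimate $\|u+v\|+\|u-v\|-2\|u\|\le 2\|u\|\rho_X(\|v\|/\|u\|)$, together with $\|z_k-z_i\|\to\lambda>0$, the difference quotients $t^{-1}\bigl(\|z_k-z_i+tw\|-\|z_k-z_i\|\bigr)$ converge to $\phi_{z_k-z_i}(w)$ uniformly in $i$ (and, after diagonalizing, in $k$); interchanging this with the differentiation in $t$ yields $\psi(z_\ell)=\frac{d}{dt}\big|_{t=0}M(tz_\ell)$. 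Since $M$ is convex with $M(0)=\lambda$, the identity $\psi(z_\ell)=0$ is therefore equivalent to $M(tz_\ell)\ge\lambda$ for all $t$, i.e.\ to $\lim_k\lim_i\|z_k-z_i+tz_\ell\|\ge\lim_k\lim_i\|z_k-z_i\|$.

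The main obstacle is precisely this last inequality (equivalently, $\psi=0$). The naive bounds -- $\|z_k-z_i+tz_\ell\|\ge\|z_k-z_i\|+t\phi_{z_k-z_i}(z_\ell)$ together with the weak lower semicontinuity of $M$ -- only reproduce the tangent line $M(tz_\ell)\ge\lambda+t\psi(z_\ell)$ and so are not sufficient; moreover a small perturbation of $(z_i)$ changes neither $M$ nor $\psi$, so the functional condition must already hold along a suitable subsequence of the sequence furnished by Lemma~\ref{L:1}, and isolating that subsequence is the heart of the matter. I expect this to require using uniform smoothness in combination with the finer information built into the proof of Lemma~\ref{L:1} (where the terms were chosen so that $\phi_{x_n-x_m}(x_n)>\vp$ uniformly, with a dual estimate $\phi_{a_nx_n-x_m}(x_n)>\vp/a_n$ in the other case): one should exploit those estimates -- after a further subsequence and the perturbation of the statement -- to force every weak$^*$ cluster point of the functionals $\phi_{z_k-z_i}$ to annihilate each fixed $z_\ell$. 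This is the step I expect to be genuinely hard, and it is where the hypothesis of uniform smoothness, rather than reflexivity or mere Gateaux smoothness, is essential.
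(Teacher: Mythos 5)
Your setup is fine as far as it goes---reducing to a normalized sequence, applying Lemma~\ref{L:1} to get $z_i=a_iy_i$ with $\lim_i\|z_k-z_i\|=\lambda$, extracting the double weak$^*$ limit $\psi$ by Ramsey/diagonalization, and observing via weak nullity that $b_\ell:=\psi(z_\ell)\to0$. But you then run into a wall and, worse, you point yourself in the wrong direction: you assert that ``the functional condition must already hold along a suitable subsequence of the sequence furnished by Lemma~\ref{L:1},'' and you restrict yourself to $z_i$'s that are scalar multiples of single $x_{n_i}$'s. That is precisely what fails. There is no reason a subsequence alone should make $b_\ell$ vanish exactly; the lemma is stated for \emph{block} sequences exactly because one needs genuine linear combinations.

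The paper's resolution is a short but nontrivial trick that your argument is missing. Having arranged $\lim_k\lim_i\phi_{x_k-x_i}(x_\ell)=b_\ell$ with $b_\ell\to0$, either some subsequence has $b_{j_\ell}\equiv0$ (done), or one passes to a subsequence with $b_\ell^2>|b_{\ell+1}|>0$ and forms the length-two blocks
$$v_\ell \;=\; x_{2\ell+1}-\frac{b_{2\ell+1}}{b_{2\ell}}\,x_{2\ell}.$$
These satisfy $\lim_k\lim_i\phi_{x_k-x_i}(v_\ell)=b_{2\ell+1}-\frac{b_{2\ell+1}}{b_{2\ell}}\,b_{2\ell}=0$ exactly, and since $|b_{2\ell+1}/b_{2\ell}|<|b_{2\ell}|\to0$ the correction is vanishingly small, so $\|v_\ell-x_{2\ell+1}\|\to0$. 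Uniform continuity of $\Phi$ on sets bounded away from $0$ (this is the only place uniform smoothness enters this step) then converts $\phi_{x_k-x_i}$ into $\phi_{v_k-v_i}$ up to an error that vanishes, giving $\lim_k\lim_i\phi_{v_k-v_i}(v_\ell)=0$. Finally one re-applies Lemma~\ref{L:1} to $(v_i)$ to restore $\lim_i\|c_kv_k-c_iv_i\|=\lambda$ with $c_\ell\to1$, and again by uniform continuity of $\Phi$ the functional condition survives this scaling; $z_\ell:=c_\ell v_\ell$ is the desired block sequence. In short: the obstacle you correctly identified is not overcome by a finer choice of subsequence or by general convexity/smoothness arguments about $M$, but by choosing a genuine block perturbation whose coefficients are rigged to cancel the limiting functional values, and then restabilizing. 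Your proposal neither finds this idea nor completes the proof.
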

\begin{proof}
After passing to a subsequence and scaling, we may assume by Lemma
\ref{L:1} that there exists $\lambda>1$ such that
$\lim_{i\rightarrow\infty}\|x_i\|=1$ and
$\lim_{i\rightarrow\infty}\|x_k- x_i\|=\lambda$ for all $k\in\N$.
 By passing to a subsequence using Ramsey's Theorem, we may assume that there exists $(b_\ell)_{\ell=1}^\infty\subset \R$
such that
$\lim_{k\rightarrow\infty}\lim_{i\rightarrow\infty}\phi_{x_k-x_i}(x_\ell)=b_\ell$
for all $\ell\in\N$. Let $x^*$ be a $w^*$ accumulation point of
$\{\phi_{x_k-x_i}\,:\, k,i\in\N\}$.  As $(x_\ell)_{\ell=1}^\infty$
is weakly null, $\lim_{\ell\rightarrow\infty} x^*(x_\ell)=0$.
Hence, $\lim_{\ell\rightarrow\infty} b_\ell=0$.
 If there exists a subsequence
$(j_\ell)_{\ell=1}^\infty$ of $\N$ such that $b_{j_\ell}=0$ for all
$\ell\in\N$ then setting $z_\ell=x_{j_\ell}$ gives our desired
sequence.
 We thus may assume by passing to a subsequence that
$b_{\ell}^2>|b_{\ell+1}|>0$ for all $\ell\in\N$. We set
$v_\ell=x_{2\ell+1}-\frac{b_{2\ell+1}}{b_{2\ell}}x_{2\ell}$. Thus,
$\lim_{k\rightarrow\infty}\lim_{i\rightarrow\infty}\phi_{x_k-x_i}(v_\ell)=0$
for all $\ell\in\N$.  Furthermore,
 $\lim_{\ell\rightarrow\infty}\|v_\ell-x_{2\ell+1}\|=0$ as $b_{\ell}\rightarrow0$ and  $b_{\ell}^2>|b_{\ell+1}|>0$ for all
 $\ell\in\N$.
As $\Phi$ is uniformly continuous on semi-normalized subsets of $X$,
we have that
$\lim_{k\rightarrow\infty}\lim_{i\rightarrow\infty}\phi_{v_k-v_i}(v_\ell)=0$
for all $\ell\in\N$.  After passing to a subsequence of $(v_i)$, we
may assume by Lemma \ref{L:1} that there exists a sequence of
constants $c_\ell\rightarrow 1$ such that
$\lim_{i\rightarrow\infty}\|c_k v_k- c_i v_i\|=\lambda$ for all
$k\in\N$.  As the map $\Phi$ is uniformly continuous on
semi-normalized subsets of $X$ and $c_k\rightarrow1$, we have that
$\lim_{k\rightarrow\infty}\lim_{i\rightarrow\infty}\phi_{c_k v_k-c_i
v_i}(c_\ell v_\ell)=0$ for all $\ell\in\N$. Furthermore, we have
that $(c_k v_k)_{k=1}^\infty$ is weakly null as
$(x_{2k+1})_{k=1}^\infty$ and $(x_{2k})_{k=1}^\infty$ are weakly
null.  Thus letting $z_k=c_k v_k$ for all $k\in\N$ gives our desired
sequence.

\end{proof}

Given a Banach space $X$, recall that the {\em modulus of
smoothness} of $X$ is the function
$\rho_X:[0,\infty)\rightarrow[0,\infty)$ defined by
$$\rho_X(\tau):=\sup\left\{\frac{1}{2}\|x+\tau y\|+\frac{1}{2}\|x-\tau y\|-1\,: x,y\in
S_X\right\}\quad\textrm{ for all }\tau\in[0,\infty).
$$
The modulus of smoothness quantifies the uniform smoothness of
$S_X$, and a Banach space is uniformly smooth if and only if
$\lim_{\tau\rightarrow 0^+}\frac{\rho_X(\tau)}{\tau}=0$.
\begin{lem}\label{L:smoothLimit}
Let $X$ be a uniformly smooth Banach space and let $Y\subseteq X$ be
a subspace.  Let $(x_j)_{j=1}^\infty\subset X$ be a seminormalized
weakly null sequence such that $\lim_{j\rightarrow\infty}\|y-ax_j\|$
exists for all $y\in Y$ and $a\in\R$. Define $\trivert\cdot\trivert$
on $Y\oplus \R$ by
$\trivert(y,a)\trivert=\lim_{j\rightarrow\infty}\|y-ax_j\|$. Then
$Y\oplus\R$ is a uniformly smooth Banach space under the norm
$\trivert\cdot\trivert$ with modulus of smoothness at most the
modulus of smoothness of $X$.
\end{lem}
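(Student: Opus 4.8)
The plan is to first dispose of the routine points---that $\trivert\cdot\trivert$ is a complete norm---and then to prove the quantitative statement $\rho_{Y\oplus\R}(\tau)\le\rho_X(\tau)$ for every $\tau\ge 0$; uniform smoothness of $(Y\oplus\R,\trivert\cdot\trivert)$ then follows at once from the hypothesis that $\rho_X(\tau)/\tau\to 0$ as $\tau\to 0^+$. Positive homogeneity and the triangle inequality for $\trivert\cdot\trivert$ are obtained by passing to the limit in the same properties of $\|\cdot\|$. For non-degeneracy, if $a\neq 0$ and $\trivert(y,a)\trivert=0$ then $x_j\to y/a$ in norm, forcing $y/a=0$ since $(x_j)$ is weakly null, and then $\lim_j\|ax_j\|=0$ contradicts that $(x_j)$ is seminormalized; hence $a=0$, and then $\|y\|=\trivert(y,0)\trivert=0$. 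For completeness, pick for each $y\in Y$ a norming functional $\phi\in S_{X^*}$, $\phi(y)=\|y\|$; since $(x_j)$ is weakly null, $\trivert(y,a)\trivert\ge\lim_j\phi(y-ax_j)=\|y\|$, and this together with $\trivert(y,a)\trivert\ge |a|\inf_j\|x_j\|-\|y\|$ and $\trivert(y,a)\trivert\le\|y\|+|a|\sup_j\|x_j\|$ shows that $\trivert\cdot\trivert$ is equivalent to the product norm $(y,a)\mapsto\|y\|+|a|$ on $Y\oplus\R$, so $(Y\oplus\R,\trivert\cdot\trivert)$ is a Banach space.

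For the modulus estimate, I would fix $\tau>0$ and $(y,a),(w,b)\in Y\oplus\R$ with $\trivert(y,a)\trivert=\trivert(w,b)\trivert=1$, and set $u_j=y-ax_j$ and $v_j=w-bx_j$, so that $\|u_j\|\to 1$ and $\|v_j\|\to 1$. Because $Y$ is a linear subspace, $(y\pm\tau w,\,a\pm\tau b)\in Y\oplus\R$, so the hypothesis guarantees that $\lim_j\|u_j\pm\tau v_j\|$ exist and equal $\trivert(y,a)\pm\tau(w,b)\trivert$. The key step is to replace $u_j,v_j$ by unit vectors: for $j$ large set $\hat u_j=u_j/\|u_j\|$, $\hat v_j=v_j/\|v_j\|\in S_X$ and $\tau_j=\tau\|v_j\|/\|u_j\|$, so that $u_j\pm\tau v_j=\|u_j\|(\hat u_j\pm\tau_j\hat v_j)$, and therefore, by the definition of the modulus of smoothness applied to $\hat u_j,\hat v_j\in S_X$ with parameter $\tau_j$,
\[
\tfrac12\|u_j+\tau v_j\|+\tfrac12\|u_j-\tau v_j\|
=\|u_j\|\Big(\tfrac12\|\hat u_j+\tau_j\hat v_j\|+\tfrac12\|\hat u_j-\tau_j\hat v_j\|\Big)
\le\|u_j\|\big(1+\rho_X(\tau_j)\big).
\]
The function $\rho_X$ is finite (one has $0\le\rho_X(\tau)\le\tau$) and convex (a supremum of convex functions of $\tau$), hence continuous at $\tau>0$; since also $\|u_j\|\to 1$ and $\tau_j\to\tau$, letting $j\to\infty$ yields $\tfrac12\trivert(y,a)+\tau(w,b)\trivert+\tfrac12\trivert(y,a)-\tau(w,b)\trivert\le 1+\rho_X(\tau)$. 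Taking the supremum over all such $(y,a),(w,b)$ gives $\rho_{Y\oplus\R}(\tau)\le\rho_X(\tau)$ for every $\tau\ge 0$ (the case $\tau=0$ being trivial), whence $Y\oplus\R$ is uniformly smooth with modulus of smoothness at most $\rho_X$.

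The one genuine obstacle is this normalization step. The vectors $u_j,v_j$ do not lie on the unit sphere, so to feed them into the definition of $\rho_X$ one must normalize and absorb the resulting distortion into the parameter, passing from $\tau$ to $\tau_j=\tau\|v_j\|/\|u_j\|$; because $\|u_j\|\to 1$ and $\|v_j\|\to 1$ we have $\tau_j\to\tau$, and continuity (indeed merely monotonicity) of $\rho_X$ then restores $\rho_X(\tau)$ in the limit. All other steps are routine, and every limit used is furnished by the standing hypothesis on $(x_j)$.
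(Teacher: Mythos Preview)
Your proof is correct and follows essentially the same approach as the paper: normalize the vectors $u_j=y-ax_j$ and $v_j=w-bx_j$ and apply the definition of $\rho_X$. The paper's execution is slightly terser---it replaces $u_j,v_j$ by $u_j/\|u_j\|,\,v_j/\|v_j\|$ while keeping $\tau$ fixed (using that $\|u_j\|,\|v_j\|\to 1$ makes the two limits coincide), so it avoids introducing $\tau_j$ and invoking continuity of $\rho_X$; your version instead keeps the identity $u_j\pm\tau v_j=\|u_j\|(\hat u_j\pm\tau_j\hat v_j)$ exact and passes to the limit in $\rho_X(\tau_j)$. Both are fine. You also supply the verification that $\trivert\cdot\trivert$ is a complete norm, which the paper omits.
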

\begin{proof}
Let $\rho_X:[0,\infty)\rightarrow[0,\infty)$ be the modulus of
smoothness of $X$.  Let $\tau>0$, and $(x,a),(y,b)\in
S_{Y\oplus\R}$. Since $\lim_{j\to\infty}\| x-a x_j\|=1$ and
$\lim_{j\to\infty}\|  y-b x_j\|=1$, we have that,
\begin{align*}
\frac{1}{2}\trivert&(x,a)+\tau(y,b)\trivert+\frac{1}{2}\trivert(x,a)-\tau
(y,b)\trivert-1\\
 &=\lim_{j\rightarrow\infty}\frac{1}{2}||x-a x_j+\tau(y
-b x_j)||+\lim_{i\rightarrow\infty}\frac{1}{2}||x-a x_i-\tau(y -b
x_j)||-1\\
&=\lim_{j\to\infty} \frac{1}{2}\left\Vert\frac{x-a x_j}{\|x-a
x_j\|}+\tau\frac{y -b x_j}{\|y -b
x_j\|}\right\Vert+\frac{1}{2}\left\Vert\frac{x-a x_j}{\|x-a
x_j\|}-\tau\frac{y -b x_j}{\|y -b
x_j\|}\right\Vert-1 \\
 &\leq\rho_X(\tau).
\end{align*}

Thus, $\rho_{Y\oplus\R}(\tau)\leq\rho_X(\tau)$ and hence $Y\oplus\R$
is uniformly smooth under the norm $\trivert\cdot\trivert$.
\end{proof}

\begin{lem}\label{L:extension}
Let $X$ be a uniformly smooth Banach space and let $Y\subseteq X$ be
a subspace.  Let $(x_j)_{j=1}^\infty\subset X$ be a seminormalized
weakly null sequence such that $\lim_{j\rightarrow\infty}\|y-ax_j\|$
exists for all $y\in Y$ and $a\in\R$. Define $\trivert\cdot\trivert$
on $Y\oplus \R$ by
$\trivert(y,a)\trivert=\lim_{j\rightarrow\infty}\|y-ax_j\|$. Then
for all $z,y\in Y$ and $a,b\in\R$,
$$\phi_{(y,a)}((z,b))= \lim_{j\rightarrow\infty}\phi_{y-a x_j}(z-b
x_j).
$$
\end{lem}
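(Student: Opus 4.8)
The plan is to identify $\phi_{(y,a)}((z,b))$ with the Gateaux derivative at $t=0$ of the convex function $t\mapsto\trivert(y,a)+t(z,b)\trivert$, and then to recognize $\phi_{y-ax_j}(z-bx_j)$ as, approximately and \emph{uniformly in $j$}, the difference quotient of that function at scale $t$. We may assume $(y,a)\neq 0$, since otherwise $\phi_{(y,a)}$ is not defined. Put $u_j=y-ax_j$, $v_j=z-bx_j$, and $f(t)=\lim_{j\to\infty}\|u_j+tv_j\|=\trivert(y,a)+t(z,b)\trivert$ for $t\in\R$. Because $(x_j)$ is seminormalized and weakly null, $ax_j\not\to y$ in norm unless $a=0=\|y\|$, so $\alpha:=\lim_j\|u_j\|=\trivert(y,a)\trivert>0$; also $K:=\sup_j\|v_j\|<\infty$. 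Fix $J_0$ with $\alpha/2\le\|u_j\|\le\alpha+1$ for all $j\ge J_0$. By Lemma~\ref{L:smoothLimit} the space $(Y\oplus\R,\trivert\cdot\trivert)$ is uniformly smooth, hence smooth, so its norm is Gateaux differentiable away from $0$ with derivative the unique norming functional; therefore $f$ is convex and differentiable at $0$ with $f'(0)=\phi_{(y,a)}((z,b))$. It thus suffices to prove $\lim_{j\to\infty}\phi_{u_j}(v_j)=f'(0)$.

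The main tool is the elementary smoothness estimate, valid in any normed space, for nonzero $u$, arbitrary $v$, and $t>0$:
\begin{equation}\label{eq:prop-est}
0\ \le\ \|u+tv\|-\|u\|-t\,\phi_u(v)\ \le\ 2\|u\|\,\rho_X\!\Big(\frac{t\|v\|}{\|u\|}\Big).
\end{equation}
The left inequality is the subgradient inequality $\|u+tv\|\ge\phi_u(u+tv)=\|u\|+t\phi_u(v)$; for the right one, add $\|u\|-\|u-tv\|-t\phi_u(v)\le 0$ (the subgradient inequality applied at $-t$) to $\|u+tv\|+\|u-tv\|-2\|u\|\le 2\|u\|\rho_X(t\|v\|/\|u\|)$, the last inequality being the definition of $\rho_X$ after normalizing $u$ and $v$. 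Applying \eqref{eq:prop-est} with $u=u_j$, $v=v_j$ for $j\ge J_0$, and using that $\rho_X$ is nondecreasing together with $\alpha/2\le\|u_j\|\le\alpha+1$ and $\|v_j\|\le K$, we obtain, with $g(t):=2(\alpha+1)\,\rho_X(2Kt/\alpha)$,
\begin{equation}\label{eq:prop-sandwich}
\frac{\|u_j+tv_j\|-\|u_j\|}{t}-\frac{g(t)}{t}\ \le\ \phi_{u_j}(v_j)\ \le\ \frac{\|u_j+tv_j\|-\|u_j\|}{t}\qquad(j\ge J_0,\ t>0),
\end{equation}
where $g(t)/t\to 0$ as $t\to0^+$ precisely because $X$ is uniformly smooth.

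To finish, fix $t>0$ and let $j\to\infty$ in \eqref{eq:prop-sandwich}; since $\|u_j+tv_j\|\to f(t)$ and $\|u_j\|\to f(0)$, this gives
\[
\frac{f(t)-f(0)}{t}-\frac{g(t)}{t}\ \le\ \liminf_{j\to\infty}\phi_{u_j}(v_j)\ \le\ \limsup_{j\to\infty}\phi_{u_j}(v_j)\ \le\ \frac{f(t)-f(0)}{t}.
\]
Letting $t\to0^+$, both outer terms tend to $f'(0)=\phi_{(y,a)}((z,b))$ (the left one because $g(t)/t\to0$), so $\lim_{j\to\infty}\phi_{y-ax_j}(z-bx_j)=\phi_{(y,a)}((z,b))$, which is the assertion.

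I expect the only genuine subtlety to be the uniformity in $j$ of the error bound in \eqref{eq:prop-sandwich}: weak nullity of $(x_j)$ provides no norm control on $u_j$, so one cannot simply invoke norm-continuity of the duality map $\Phi$; it is exactly uniform smoothness of $X$ (equivalently, uniform continuity of $\Phi$ on the sphere) that makes $g$ independent of $j$ and licenses the exchange of the limits in $j$ and in $t$. Everything else is bookkeeping: checking $\alpha>0$, the boundedness of $(v_j)$, and the identification of $f'(0)$ with the norming functional through Lemma~\ref{L:smoothLimit}.
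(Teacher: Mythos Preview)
Your argument is correct and proceeds along a genuinely different line from the paper. The paper does not use the quantitative estimate \eqref{eq:prop-est}; instead it argues ``softly'': it first notes the identity at $(z,b)=(y,a)$, then shows by contradiction that $\lim_j\phi_{y-ax_j}(z-bx_j)=0$ whenever $\phi_{(y,a)}((z,b))=0$ (if not, a subsequence gives a directional lower bound $\trivert(y,a)+\lambda\sigma(z,b)\trivert\ge 1+\lambda c$, forcing $\phi_{(y,a)}(\sigma(z,b))\ge c$), and finally extends by linearity from the span of $(y,a)$ and $\ker\phi_{(y,a)}$. Your approach has the advantage of being constructive and of proving directly that the limit over $j$ exists, with the explicit rate $g(t)/t$ controlling the exchange of limits; the paper's approach, on the other hand, avoids computing with $\rho_X$ altogether and uses only the Gateaux-derivative characterization of $\phi_{(y,a)}$ in the limit space, at the cost of a slightly more indirect decomposition. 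Both routes lean on Lemma~\ref{L:smoothLimit} in the same way, to identify $f'(0)$ with $\phi_{(y,a)}((z,b))$.
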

\begin{proof}
Let $(y,a)\in S_{Y\oplus\R}$. We have that
$$\phi_{(y,a)}((y,a))=\trivert(y,a)\trivert=\lim_{j\rightarrow\infty}\|y-a
x_j\|=\lim_{j\rightarrow\infty}\phi_{y-a x_j}(y -a x_j).
$$
Let $(z,b)\in S_{Y\oplus\R}$ such that $\phi_{(y,a)}((z,b))=0$.
Assume that $\lim_{j\rightarrow\infty}\phi_{y-a x_j}(z-b x_j)\neq0$.
Thus, there exists $c>0$, $\sigma\in\{-1,1\}$, and a subsequence
$(k_j)_{j\in\N}$ of $\N$ such that $\sigma\phi_{y-a x_{k_j}}(z-b
x_{k_j})\geq c$ for all $j\in\N$. Let $\lambda>0$.
\begin{align*}
\trivert(y,a)+\lambda
\sigma(z,b)\trivert&=\lim_{j\rightarrow\infty}\|y-a
x_j+\lambda\sigma (z -b x_j)\|\\
&\geq \liminf_{j\rightarrow\infty}\phi_{y-a x_{k_j}}(y-a
x_{k_j}+\lambda\sigma
(z- b x_{k_j}))\\
&=\lim_{j\rightarrow\infty}\phi_{y-a x_{k_j}}(y-a
x_{k_j})+\lambda\liminf_{j\rightarrow\infty}\sigma\phi_{y-a
x_{k_j}}(
z- b x_{k_j})\\
&\geq 1+\lambda c.
\end{align*}
Hence, we have that
$$\phi_{(y,a)}(\sigma(z,b))=\lim_{\lambda\rightarrow0}\frac{\trivert(y,a)+\sigma\lambda(z,b)\trivert-\trivert(y,a)\trivert}{\lambda}\geq\frac{(1+\lambda
c)-1}{\lambda}=c.
$$
This is a contradiction as we have assumed that
$\phi_{(y,a)}((z,b))=0$. Thus $\lim_{j\rightarrow\infty}\phi_{y-a
x_j}(z-b x_j)=0$ for all $(z,b)\in \phi_{(y,a)}^{-1}(0)$.  We have
as well that $\lim_{j\rightarrow\infty}\phi_{y-a x_j}(y-a
x_j)=\phi_{(y,a)}((y,a))$.  Thus,
$\lim_{j\rightarrow\infty}\phi_{y-a x_j}(z-b
x_j)=\phi_{(y,a)}((z,b))$ for all $(z,b)\in Y\oplus \R$.
\end{proof}

\section{A uniform version of the Inverse Mapping Theorem}\label{S:2}

Let $d\in\N$ and $U\subset \R^d$ be a compact and convex subset
whose interior contains the origin. We denote by $C^1_0(U,\R^d)$ the
space of all continuously differentiable function $f: U\to \R^d$,
with $f(0)=0$.
   For $f\in C^1_0(U,\R^d)$, let $f_i$ denote  the $i$-th component of $f$, for $i\le d$. The derivative
    function is denoted by $Df$, {\it i.e.},
$$Df: U\to \R^{(d,d)} \quad \xi \mapsto \left[ \frac{\partial f_i}{\partial x_j}(\xi)\right]_{1\le i,j\le d}.$$
$\R^{(d,d)}$ is the space of $d\times d$ matrices.
 Elements of $\R^{(d,d)}$ can be seen as operators on $\ell^d_2$ and  we denote the operator norm
 on $\R^{(d,d)}$ by $\|\cdot\|_2$.  We also denote the Euclidean norm on $\R^d$ by $\|\cdot\|_2$.

 It follows for $f\in C^1_0(U,\R^{(d,d)}) $ that  the map $Df(\cdot)$ lies in $C(U,\R^{(d,d)})$, the space
 of all $\R^{(d,d)}$-valued continuous functions on $U$.  For
 $M\in C(U,\R^{(d,d)})$ we let $\|M\|_\infty=\sup_{\xi} \|M(\xi)\|_2$ and
 for $f\in C^1_0(U,\R^d)$ we let $\|f\|_{(1,\infty)}=\|Df\|_\infty$. Then
 $\|\cdot\|_\infty$ and $\|\cdot\|_{(1,\infty)}$ are norms on $C(U,\R^{(d,d)} )$ and
  $C^1_0(U,\R^d)$ respectively, which turn  $C(U,\R^{(d,d)} )$ and  $C^1_0(U,\R^d)$ into Banach spaces, and the operator
  $$ D: C^1_0(U,\R^d)\to C(U, \R^{(d,d)}), \quad f\mapsto Df,$$
  is an isometric embedding, onto the subspace of continuous  functions
  $$M=[M_{(i,j)}]: U\to \R^{(d,d)}, \quad \xi\mapsto [ M_{(i,j)}(\xi)]_{1\le i,j\le d},$$
  for which the $i$th row, $[ M_{(i,j)}(\cdot)  ]_{1\le j\le d} $ is a conservative vector field, for all $i=1,2\ldots d$.

  From these considerations and the Theorem of Arzela-Ascoli we obtain the
  following compactness criterium.

\begin{prop}\label{P:1} A bounded subset $B\subset C^{1}_0(U,\R^d)$ is relatively norm compact if and only if
 $\{ Df: f\in B\}$ is equicontinuous.
\end{prop}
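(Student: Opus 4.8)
The plan is to push everything through the isometric embedding $D\colon C^1_0(U,\R^d)\to C(U,\R^{(d,d)})$ identified above, and then appeal to the Arzel\`a--Ascoli theorem in $C(U,\R^{(d,d)})$. Recall that $D$ is an isometry from $(C^1_0(U,\R^d),\|\cdot\|_{(1,\infty)})$ onto its image, which is a closed subspace of the Banach space $(C(U,\R^{(d,d)}),\|\cdot\|_\infty)$; closedness is automatic, since the isometric image of a complete space is closed (concretely, if $Df_n\to M$ uniformly then, because $f_n(0)=0$, integrating along rays from the origin shows that $f_n$ converges uniformly to some $f\in C^1_0(U,\R^d)$ with $Df=M$). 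Because $D$ is an isometry onto a closed subspace and both spaces are complete, a subset of $C^1_0(U,\R^d)$ is relatively norm compact if and only if it is totally bounded, $D$ preserves total boundedness, and the closure of $D(B)$ taken in $C(U,\R^{(d,d)})$ consists of functions of the form $Df$; hence $B\subset C^1_0(U,\R^d)$ is relatively compact if and only if $D(B)=\{Df\colon f\in B\}$ is relatively compact in $C(U,\R^{(d,d)})$. Likewise $B$ is bounded in $\|\cdot\|_{(1,\infty)}$ exactly when $D(B)$ is bounded in $\|\cdot\|_\infty$.

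It therefore suffices to prove the corresponding statement in $C(U,\R^{(d,d)})$: a bounded set $\cB\subset C(U,\R^{(d,d)})$ is relatively compact if and only if $\cB$ is equicontinuous. This is exactly the Arzel\`a--Ascoli theorem once one observes that its remaining hypothesis, pointwise relative compactness of $\cB$ (i.e. relative compactness of $\{M(\xi)\colon M\in\cB\}$ in $\R^{(d,d)}$ for each fixed $\xi\in U$), is automatic here: $\R^{(d,d)}$ is finite dimensional, so its bounded subsets are relatively compact, and $\cB$ is bounded in $\|\cdot\|_\infty$. Thus, for bounded $\cB$, equicontinuity is equivalent to relative compactness.

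Chaining the two reductions then proves the proposition: given that $B$ is bounded, one obtains the equivalences ``$B$ relatively compact in $C^1_0(U,\R^d)$'' $\iff$ ``$D(B)$ relatively compact in $C(U,\R^{(d,d)})$'' $\iff$ ``$D(B)$ equicontinuous'' $\iff$ ``$\{Df\colon f\in B\}$ equicontinuous'', the last two statements being literally the same. The argument is essentially bookkeeping around Arzel\`a--Ascoli; the only place demanding a little care, and what I would flag as the main (and mild) obstacle, is matching relative compactness in the subspace $C^1_0(U,\R^d)$ with relative compactness in the ambient space $C(U,\R^{(d,d)})$. This is resolved by the observations that relative compactness coincides with total boundedness in complete metric spaces and that total boundedness is isometry-invariant, or equivalently by the closedness of the image $D(C^1_0(U,\R^d))$ noted above; no genuine difficulty arises.
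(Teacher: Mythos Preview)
Your argument is correct and is precisely the approach the paper intends: the paper does not give a separate proof of this proposition but simply notes that it follows from the isometric embedding $D\colon C^1_0(U,\R^d)\hookrightarrow C(U,\R^{(d,d)})$ together with the Arzel\`a--Ascoli theorem, which is exactly what you have carried out in detail. The only extra care you supplied---closedness of the image and the equivalence of relative compactness with total boundedness---fills in routine steps the paper leaves implicit.
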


   For  a decreasing function $\delta(\cdot):(0,1)\to (0,1) $, with $\lim_{\vp\to0}\delta(\vp)=0$,     and a real number  $R>0$
    we let $\cF_{(\delta(\cdot),R)}$ be the set of all $f\in C^1_0(U,\R^d)$ for which
    $ \|Df(0)\|_2\le R$,  $Df(0)$ is invertible, with $\|Df(0)^{-1}\|_2\le R$, for which the modulus of continuity of $Df$ is not larger than $\delta(\cdot)$,
    {\it i.e.} $\| Df(\xi)-Df(\eta)\|_2\le \vp$, for $\xi,\eta\in U$ with $\|\xi-\eta\|_2\leq
    \delta(\vp)$.  Note that $ \cF_{(\delta(\cdot),R)}$ is a closed and bounded set and
     $\{ Df: f\in \cF_{(\delta(\cdot),R)}\}$ is equicontinuous.  Thus, $\cF_{(\delta(\cdot),R)}$ is compact by Proposition
     \ref{P:1}.

We now state and prove a uniform version of the inverse mapping
theorem.  This will be used in proving our main result in Section
\ref{S:3}.

 \begin{cor}\label{C:2} Let $d\in\N$. For all  $R>0$ and decreasing functions
  $\delta(\cdot):(0,1)\to (0,1) $, with $\lim_{\vp\to0}\delta(\vp)=0$, there is an
  $\eta=\eta(\delta(\cdot),R)$, so that for all $f\kin    \cF_{(\delta(\cdot),R)}$ we have
  $\eta B^d \subset f(U)$, where $B^d$ denotes the Euclidean unit ball in $\R^d$.

 \end{cor}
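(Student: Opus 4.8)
The plan is to prove this as a \emph{quantitative, uniform} form of the inverse function theorem, via the usual contraction‑mapping argument but with every constant tracked so that it depends only on $R$, on the function $\delta(\cdot)$, and on the fixed set $U$. First fix, once and for all, a radius $r_U>0$ with $\{\xi\in\R^d:\|\xi\|_2\le r_U\}\subset U$; this is possible since $0$ lies in the interior of $U$. Next choose $\vp_0\in(0,1)$ with $R\vp_0\le\tfrac12$ (for instance $\vp_0=\min\{1/2,\,1/(2R)\}$), set $r_0:=\min\{\delta(\vp_0),\,r_U\}>0$, and finally define
$$\eta=\eta(\delta(\cdot),R):=\frac{r_0}{2R}>0.$$
This $\eta$ is visibly independent of the particular $f\in\cF_{(\delta(\cdot),R)}$.

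Now I would fix $f\in\cF_{(\delta(\cdot),R)}$, write $A:=Df(0)$ (invertible, with $\|A\|_2\le R$ and $\|A^{-1}\|_2\le R$), and consider $g(\xi):=\xi-A^{-1}f(\xi)$ on the closed Euclidean ball $V:=\{\xi:\|\xi\|_2\le r_0\}$, which is convex and, since $r_0\le r_U$, contained in $U$. Because $Dg(\xi)=A^{-1}\big(Df(0)-Df(\xi)\big)$ and $\|\xi\|_2\le r_0\le\delta(\vp_0)$ forces $\|Df(\xi)-Df(0)\|_2\le\vp_0$ by the modulus‑of‑continuity hypothesis, we get $\|Dg(\xi)\|_2\le R\vp_0\le\tfrac12$ for all $\xi\in V$; the mean value inequality on the convex set $V$ then makes $g$ a $\tfrac12$‑Lipschitz map on $V$, and since $f(0)=0$ we have $g(0)=0$, hence $\|g(\xi)\|_2\le\tfrac12\|\xi\|_2\le r_0/2$ on $V$. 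Given any $y$ with $\|y\|_2\le\eta$, set $\Psi_y(\xi):=g(\xi)+A^{-1}y$; then $\|\Psi_y(\xi)\|_2\le r_0/2+\|A^{-1}\|_2\|y\|_2\le r_0/2+R\eta=r_0$, so $\Psi_y$ maps $V$ into $V$, and it is $\tfrac12$‑Lipschitz since $\Psi_y(\xi)-\Psi_y(\xi')=g(\xi)-g(\xi')$. By the Banach fixed point theorem there is $\xi^\ast\in V\subset U$ with $\Psi_y(\xi^\ast)=\xi^\ast$, that is, $A^{-1}\big(f(\xi^\ast)-y\big)=0$, whence $f(\xi^\ast)=y$. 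Thus $\{y:\|y\|_2\le\eta\}\subset f(U)$, and in particular $\eta B^d\subset f(U)$, for every $f\in\cF_{(\delta(\cdot),R)}$.

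There is no genuinely hard step; the one thing to watch is \emph{uniformity}, and this is precisely what the definition of $\cF_{(\delta(\cdot),R)}$ is engineered to supply. The common modulus of continuity $\delta(\cdot)$ lets us choose a single radius $r_0$ on which $g$ is a contraction simultaneously for all admissible $f$; the uniform bounds $\|Df(0)\|_2,\|Df(0)^{-1}\|_2\le R$ control $A$ and $A^{-1}$; and $r_U$ depends only on the fixed $U$. (An alternative would be to exploit the compactness of $\cF_{(\delta(\cdot),R)}$ established just above: for each $f$ the ordinary inverse function theorem gives some $\eta_f>0$, and a topological‑degree stability argument shows $\eta_f/2$ persists on a whole $C^1$‑neighborhood of $f$; a finite subcover of $\cF_{(\delta(\cdot),R)}$ then produces a uniform $\eta$. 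The direct contraction argument above is shorter and yields an explicit constant, so that is the route I would take.)
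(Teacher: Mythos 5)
Your proof is correct, and it takes a genuinely different route from the paper's. The paper argues by contradiction: it uses the compactness of $\cF_{(\delta(\cdot),R)}$ (established via Proposition~\ref{P:1} and Arzela--Ascoli) to extract from a hypothetical failing sequence $f^{(n)}$ a $C^1$-limit $f$, composes with $f^{-1}$ to reduce to maps $g^{(n)}=f^{-1}\circ f^{(n)}$ close to the identity, and then runs a Newton-type iteration $x_{m+1}=x_m+(Dg(x_m))^{-1}(y-g(x_m))$ to show each $g^{(n)}$ covers a fixed ball, contradicting the assumption. Your argument instead proves a fully quantitative inverse function theorem directly: you read off a radius $r_0=\min\{\delta(\vp_0),r_U\}$ from the common modulus of continuity, check that $g(\xi)=\xi-Df(0)^{-1}f(\xi)$ is a $\tfrac12$-contraction on $r_0B^d$, and apply the Banach fixed point theorem to $\Psi_y(\xi)=g(\xi)+Df(0)^{-1}y$ for each $\|y\|_2\le\eta$. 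This avoids Proposition~\ref{P:1} entirely, avoids the contradiction framework, and produces an explicit $\eta=r_0/(2R)$ rather than a nonconstructive one; the price is having to track the constants by hand, but as you note this is exactly what the definition of $\cF_{(\delta(\cdot),R)}$ is set up to permit. One minor bookkeeping point worth stating explicitly: your $\eta$ also depends on $r_U$, hence on the fixed domain $U$; the paper's $\eta$ has the same implicit dependence, so this matches the intended statement, but since the corollary writes $\eta=\eta(\delta(\cdot),R)$ it is worth flagging that $U$ is held fixed throughout the section.
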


\begin{proof}
Assume our claim was not true. Then we could choose a sequence
$f^{(n)}\subset   \cF_{(\delta(\cdot),R)}$, so that $\frac1n
B^d\not\subset f^{(n)}(U)$, for all $n\in\N$.

As $ \cF_{(\delta(\cdot),R)}$ is compact, we may assume that
$f^{(n)}$ converges in norm to some $f\in  \cF_{(\delta(\cdot),R)}$.
By the Inverse Mapping Theorem $f$ has a continuously differentiable
inverse $f^{-1}$ on some neighborhood  $V\subset U$ of the origin.
Since the sequence $(Df^{(n)})_{n=1}^\infty$ is bounded, the
sequence $(f^{(n)})_{n=1}^\infty$ is equicontinuous and we can find
$\rho>0$ so that that for all $n\in\N$ $f^{(n)}(\rho B^d)\subset V$.
For $n\in\N$ we consider the map
$$ g^{(n)} : \rho B^d\to \R^d,\quad \xi\mapsto f^{-1}\circ f^{(n)}(\xi).$$
The sequence $(g^{(n)})_{n=1}^\infty$ converges in $C_0^1(\rho
B^d,\R^d)$ to the identity. After possibly decreasing
 $\rho$  and passing to a subsequence of the $(g^{(n)})$ we may assume that for all $n\in\N$
 \begin{align}
 \label{E:1.1}
 \|Dg^{(n)}(x)- \Id\|_2 \le\frac12 \text{ and } \|(Dg^{(n)}(x))^{-1} -\Id\|_2\le \frac12 , \text{ for all $x\in\rho B^d$,} \\
 \label{E:1.2}
\big\| g^{(n)}(z)\!-\!(g^{(n)}(x)\!+\!D
g^{(n)}(x)\!\circ\!(z\!-\!x))\big\|_2 \kle \frac18\|z-x\|_2,\text{
for all $x,z\kin \rho B^d$.}
\end{align}
\eqref{E:1.1} can be achieved since  $D{g^{(n)}}(\cdot)$ uniformly
converges to the identity matrix, and \eqref{E:1.2} can be achieved
using the Taylor formula and the equicontinuity of  the sequence
$\big(D{g^{(n)}}(\cdot)\big)$.

 We claim that the image of $\rho B^d$ under $g=g^{(n)},$ $n\in\N$ contains $\frac\rho{4} B^d$.

  Indeed, assume
 $y\in  \frac\rho4B^d$. Choose $x_1=y$ and note that 

\begin{align*}
 \|g^{(n)}(x_1)-y\|_2 &\le
\|g^{(n)}(y)-Dg^{(n)}(0)(y)\|_2+ \|Dg^{(n)}(0)(y)-y\|_2\\
&\le  \frac18 \|y\|_2+\frac12 \|y\|_2\le \frac{\rho}4.
\qquad\textrm{ by \eqref{E:1.2} and \eqref{E:1.1}}.
\end{align*}


Assume that we have chosen $x_1,x_2,\ldots x_m\in \rho B^d$
satisfying the following conditions for all $j=1,2 \ldots m$.
\begin{align}\label{E:1.3}
\|x_j-x_{j-1}\|_2&\le \frac32\Big(\frac14\Big)^{j-1}\rho\text{ \ \   (if $j>1$) and thus }\\
 \|x_j\|_2&\le \frac\rho4+\rho\sum_{i=2}^j  \frac32\Big(\frac14\Big)^{i-1}<\rho, \notag  \\
\label{E:1.4} \|g(x_j) -y\|_2 &\le \Big(\frac1{4}\Big)^{j} \rho .
\end{align}
 Then we let
 $$x_{m+1} = x_m+ \big(Dg(x_m)\big)^{-1} \big(y-g(x_m)\big).$$

 It follows  from \eqref{E:1.1} and the induction hypothesis \eqref{E:1.4} that
 \begin{align}\label{E:1.5}
 \| x_{m+1}-x_m\|_2\le  \big\|(Dg(x_m))^{-1} \big\|_2\cdot \|y-g(x_m)\|_2 \le
 \frac32\Big(\frac14\Big)^{-m}\rho.
 \end{align}
 We now have that
 \begin{align*}
 \|&g(x_{m+1}) -y\|_2  \\
  &\le \underbrace{\big\| g(x_m)+ D g(x_m)\circ(x_{m+1}-x_m)-y\big\|_2}_{=0}\\
  &\qquad+\|g(x_{m+1})-(g({x_m})+Dg(x_m)\circ(x_{m+1}-x_m))\|_2\\
  &\le \frac{1}{8}\|x_{m+1}-x_m\|_2\quad\quad\textrm{ by
  }(\ref{E:1.2})\\
&\le
\frac{1}{8}\frac32\Big(\frac14\Big)^{-m}\rho<\Big(\frac14\Big)^{-(m+1)}\rho
\quad\quad\textrm{ by
  }(\ref{E:1.5}).
 \end{align*}
 which finishes the induction step.

 Letting $x=\lim_{m\to\infty} x_m =x_1+\sum_{j=1}^\infty x_{j+1} -x_j$ it follows that
 $$\|x\|_2\le    \frac{\rho}4  +\frac{3\rho}2\sum_{j=1}^\infty \frac1{4^j}  \le \frac{\rho}8+ \frac32\frac14\frac43\rho<\rho,$$
 and by \eqref{E:1.4} we have $g(x)=y$.  Hence, the image of $\rho B^d$ under $g=g^{(n)},$ $n\in\N$ contains $\frac\rho{4} B^d$.

 Finally we can find a positive $\rho'>0$ so that $\rho'B^d\subset f(\frac\rho4B^d)$, and thus
 $$ \rho' B^d \subset f\Big(\frac\rho4B^d\Big)\subset f \circ g^{(n)}(\rho B^d)= f^{(n)}(\rho B^d)\subset f^{(n)}(U),$$
 which contradicts  $\frac1n
B^d\not\subset f^{(n)}(U)$, for all $n\in\N$, and hence our proof is
complete.
\end{proof}

\section{Constructing an equilateral set}\label{S:3}

Given an infinite dimensional uniformly smooth Banach space $X$, our
goal is to construct an equilateral sequence
$(x_n)_{n=1}^\infty\subset X$.  This will be done by first
constructing a sequence $(z_n)_{n=1}^\infty\subset X$ which is
``close'' to being equilateral as in Section \ref{S:1}. We will then
choose $\vp_n\searrow0$ and perturb $(z_n)_{n=1}^\infty$ by a
triangular array of constants $(a_{i,n})_{1\leq i\leq n<\infty}$
(with $|a_{i,n}|<\vp_n$ for all $1\leq i\leq n$) such that if we set
$x_n= (1+a_{n,n})z_n+\sum_{i=1}^{n-1} a_{i,n} z_i$ then
$(x_n)_{n=1}^\infty$ is equilateral.  The sequence $\vp_n\searrow0$
will be determined by the following lemma.

For $N\in\N$, $(\vp_i)_{i=2}^N\subset [0,1)$, and $1> C>0$ we define
$A_{N\times N}(C,(\vp_i)_{i=2}^N)$ to be the set of $N\times N$
matrices $[a_{i,j}]_{1\leq i,j\leq N}\in\R^{\N\times\N}$ which
satisfy the following three properties.
\begin{enumerate}
\item $|a_{i,j}|\leq 2$ for all $1\leq i,j\leq N$,
\item $|a_{i,i}|\geq C$ for all $1\leq i\leq N$,
\item $|a_{i,j}|\leq\vp_j$  for all $1\leq i<j\leq N$.
\end{enumerate}

\begin{lem}\label{L:UpperT}
For all $C>0$ there exists a sequence
$(R_N)_{N=1}^\infty\subset(0,\infty)$ and
 a sequence $(\vp_i)_{i=2}^\infty\subset(0,1)$ such that $A$ is invertible and
$\|A^{-1}\|\leq R_N$ for all $A\in A_{N\times
N}(C,(\vp_i)_{i=2}^N)$.
\end{lem}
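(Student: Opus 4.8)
The plan is to construct the two sequences simultaneously by induction on $N$, fixing $\vp_N$ and $R_N$ one at a time so that at stage $N$ only data determined by the earlier stages is used. The base case $N=1$ is immediate: $A_{1\times1}(C,\emptyset)$ consists of scalars $a_{1,1}$ with $C\le|a_{1,1}|\le 2$, so such a matrix is invertible with $|a_{1,1}|^{-1}\le C^{-1}=:R_1$.

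For the inductive step fix $N\ge 2$, assume $\vp_2,\dots,\vp_{N-1}\in(0,1)$ and $R_{N-1}$ have already been chosen, and let $A=[a_{i,j}]\in A_{N\times N}(C,(\vp_i)_{i=2}^N)$, written in block form
\[
A=\begin{pmatrix} A' & b\\ c^{T} & a_{N,N}\end{pmatrix},\qquad A'=[a_{i,j}]_{1\le i,j\le N-1},\ \ b=(a_{i,N})_{i=1}^{N-1},\ \ c=(a_{N,j})_{j=1}^{N-1}.
\]
Properties (1)--(3) give $\|b\|_2\le\sqrt{N-1}\,\vp_N$, $\|c\|_2\le 2\sqrt{N-1}$, $|a_{N,N}|\ge C$, and $A'\in A_{(N-1)\times(N-1)}(C,(\vp_i)_{i=2}^{N-1})$, so the inductive hypothesis gives that $A'$ is invertible with $\|(A')^{-1}\|_2\le R_{N-1}$. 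The key point is that the new column $b$ can be made as small as we wish, because $\vp_N$ is chosen now, knowing $R_{N-1}$.

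I would then control $A$ through its Schur complement $s=a_{N,N}-c^{T}(A')^{-1}b$ relative to the block $A'$. Since $|c^{T}(A')^{-1}b|\le 2(N-1)R_{N-1}\vp_N$, choosing $\vp_N\in(0,1)$ so small that $2(N-1)R_{N-1}\vp_N\le C/2$ (and, say, $\sqrt{N-1}\,\vp_N\le 1$) forces $|s|\ge C/2>0$; hence $A$ is invertible, and the block inversion formula
\[
A^{-1}=\begin{pmatrix} (A')^{-1}+(A')^{-1}b\,s^{-1}c^{T}(A')^{-1} & -(A')^{-1}b\,s^{-1}\\ -s^{-1}c^{T}(A')^{-1} & s^{-1}\end{pmatrix}
\]
bounds $\|A^{-1}\|_2$ by an explicit expression in $R_{N-1}$, $N$ and $C$ (using $\|b\|_2\le 1$, $\|c\|_2\le 2\sqrt{N-1}$ and $|s|^{-1}\le 2/C$); call this bound $R_N$. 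Running the recursion over all $N$ produces the desired sequences $(R_N)_{N=1}^\infty$ and $(\vp_i)_{i=2}^\infty$.

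The only delicate point — and the reason a single sequence $(\vp_i)$ can be used — is that the constants $R_N$ typically grow very fast in $N$ (already for a lower bidiagonal matrix with diagonal $C$ and subdiagonal $2$ one gets $\|A^{-1}\|_2$ of order $(2/C)^N$), so one cannot prove the lemma by splitting an arbitrary $A\in A_{N\times N}$ into its lower-triangular part $L$ plus a globally small full perturbation and demanding that the perturbation be dominated by $\|L^{-1}\|_2^{-1}$: that would force one fixed summable sequence $(\vp_i)$ to meet incompatible constraints as $N\to\infty$. The inductive/Schur-complement argument sidesteps this, since at each stage only the single new column $b$ must be small and $\vp_N$ is chosen after $R_{N-1}$ is known; the only compatibility needed across different $N$ is the nesting $A'\in A_{(N-1)\times(N-1)}$, which holds automatically.
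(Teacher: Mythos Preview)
Your argument is correct and complete. It differs from the paper's proof in a useful way: both proceed by induction on $N$ with the same base case, but at the inductive step the paper argues by compactness rather than by explicit computation. In the paper, one first freezes the new column to zero, observes that the resulting set $A_{(N+1)\times(N+1)}(C,(\vp_2,\dots,\vp_N,0))$ is a compact set of invertible matrices (the upper-left $N\times N$ block is invertible by induction and the last column has only the nonzero diagonal entry), and then uses openness of $\mathrm{GL}(N+1)$ and continuity of $A\mapsto A^{-1}$ on this compact set to extract an admissible $\vp_{N+1}$ and a bound $R_{N+1}$. Your Schur-complement approach instead tracks the inverse explicitly via the block inversion formula, choosing $\vp_N$ so that the Schur complement satisfies $|s|\ge C/2$, and reading off a concrete $R_N$ in terms of $R_{N-1}$, $N$ and $C$. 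What your route buys is constructivity: the paper's $R_N$ and $\vp_N$ are existence statements coming from compactness, whereas yours can be written down as closed-form recursions. The paper's route buys brevity and robustness: no algebra with the block inversion formula is needed, and the same argument would work verbatim for any compact family of invertible ``limit'' matrices. Your closing remark about why one cannot simply treat $A$ as a lower-triangular matrix plus a single global small perturbation is a genuinely helpful diagnostic and is not in the paper.
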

\begin{proof}

We will prove the lemma by induction on $N\in\N$.  For $N=1$ the
lemma holds for $R_1=\frac{1}{C}$. We now let $N\in\N$ and assume
that $(\vp_i)_{i=2}^N$ has been chosen such that if $A\in A_{N\times
N}(C,(\vp_i)_{i=2}^N)$ then $A$ is invertible.  We let
$A'=A_{N+1\times N+1}(C,(\vp_2,\vp_3,...,\vp_N,0)$.

If $[a_{i,j}]_{1\leq i,j\leq N+1}\in A'$ then $[a_{i,j}]_{1\leq
i,j\leq N}\in A_{N\times N}(C,(\vp_i)_{i=2}^N)$ is invertible by the
induction hypothesis, and hence $[a_{i,j}]_{1\leq i,j\leq N+1}$ is
invertible because the last column of $[a_{i,j}]_{1\leq i,j\leq
N+1}$ is linearly independent from the others.  Thus, $A'$ is a
compact set of invertible matrices.  As the set of invertible
matrices on $\R^{N+1}$ is open, there exists $\vp_{N+1}$ such that
$[a_{i,j}+\delta_{i,j}]_{1\leq,i,j\leq N+1}$ is invertible for all
$[a_{i,j}]_{1\leq i,j\leq N+1}\in A'$ with $|\delta_{i,j}|\leq
\vp_{N+1}$ for all $1\leq i,j\leq N+1$.  The map $A\mapsto A^{-1}$
is continuous on the set of invertible matrices, and hence there
exists a constant $R_{N+1}>0$ such that
$\|[a_{i,j}+\delta_{i,j}]_{1\leq,i,j\leq N+1}^{-1}\|\leq R_{N+1}$
for all $[a_{i,j}]_{1\leq i,j\leq N+1}\in A'$ with
$|\delta_{i,j}|\leq \vp_{N+1}$ for all $1\leq i,j\leq N+1$ as this
set is compact.  Thus, $\|A^{-1}\|\leq R_{N+1}$ for all $A\in
A_{(N+1)\times (N+1)}(C,(\vp_i)_{i=2}^{N+1})$.
\end{proof}

We are now ready to prove our main result.

\begin{thm} Let $X$ be an infinite dimensional uniformly smooth
Banach space.  There exists a sequence $(x_i)_{i=1}^\infty\subset X$
and a constant $\lambda>0$ such that $\|x_i-x_j\|=\lambda$ for all
$i\neq j$.
\end{thm}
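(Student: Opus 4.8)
The plan is to produce the equilateral sequence by recursively perturbing the stabilized sequence of Section~\ref{S:1}: at stage $n$ one adjusts finitely many coefficients so as to solve a finite system of norm–equations, the existence of the adjustment coming from the uniform inverse mapping theorem (Corollary~\ref{C:2}), whose hypotheses I verify with the matrix lemma (Lemma~\ref{L:UpperT}). First, since a uniformly smooth space is reflexive, $X$ has a normalized weakly null sequence, and by the remark preceding Lemma~\ref{L:1} some subsequence has a spreading model $(e_i)$ with $\|e_1-e_2\|=\lambda>1$; Lemma~\ref{L:1} and then Lemma~\ref{L:2} (after relabelling) give a weakly null $(z_i)$ with $\|z_i\|\to1$, with $\lim_m\|z_k-z_m\|=\lambda$ for every $k$, and with $\lim_k\lim_m\phi_{z_k-z_m}(z_\ell)=0$ for every $\ell$; in particular the inner limits $c_k(\ell):=\lim_m\phi_{z_k-z_m}(z_\ell)$ exist and tend to $0$ as $k\to\infty$ for each fixed $\ell$. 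Fix a small $C>0$ (depending on $\lambda$) and take $(R_N),(\vp_i)$ from Lemma~\ref{L:UpperT}; shrinking the $\vp_i$ is harmless and we may assume they are as small as needed below. Then pass to a rapidly increasing subsequence, still called $(z_i)$ (and arrange the limiting norms below to exist): when the $n$-th term is chosen I require it to sit so far out relative to $Y_{n-1}:=\spa(z_1,\dots,z_{n-1})$ that, with $\trivert\cdot\trivert$ the limiting norm of Lemma~\ref{L:smoothLimit} on $Y_n\oplus\R$, one has $|\|z_n-w\|-\trivert(w,1)\trivert|<\delta_n$ uniformly over $w\in Y_{n-1}$ with $\|w\|\le10$, and the finitely many functional values governing the derivative matrix below are below $\vp_n$; here $\delta_n\searrow0$ is prescribed, and these requirements are legitimate because on the finite-dimensional space $Y_{n-1}$ the convergences are uniform on bounded sets, because $c_k(\ell)\to0$, and because every index being constrained is already fixed at the time the $n$-th term is chosen.

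Now I build $x_n=(1+a_{n,n})z_n+\sum_{i=1}^{n-1}a_{i,n}z_i$ with $|a_{i,n}|<\vp_n$, keeping the inductive hypotheses that $\|x_p-x_q\|=\lambda$ for $p\neq q<n$, that $\|x_p-z_p\|$ is tiny, and that $\lim_m\|x_p-z_m\|=\lambda$. Given $x_1,\dots,x_{n-1}$, write $v(a)=(1+a_{n,n})z_n+\sum_{i<n}a_{i,n}z_i$ for $a\in\R^n$, so $\lim_m\|v(a)-z_m\|=\trivert(v(a),1)\trivert$ exists by Lemma~\ref{L:smoothLimit}, and set, on the cube $[-\vp_n,\vp_n]^n$,
\[
F_n(a)=\big(\trivert(v(a),1)\trivert-\lambda,\ \|v(a)-x_1\|-\lambda,\ \dots,\ \|v(a)-x_{n-1}\|-\lambda\big)\in\R^n .
\]
By uniform smoothness and Lemmas~\ref{L:smoothLimit}--\ref{L:extension}, $F_n$ is $C^1$ near $0$ (the relevant vectors being bounded away from $0$, since $v(0)=z_n$ and $\|z_n-x_p\|\approx\lambda$), and the entries of $DF_n$ are duality-map values $\phi_{v(a)-x_p}(z_i)$ and their limiting analogues. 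Order the $\trivert\cdot\trivert$-equation and the variable $a_{n,n}$ last and pair the $\{p,n\}$-equation with $a_{p,n}$: at $a=0$ the diagonal entries are $\phi_{z_n-x_p}(z_p)$ and $\lim_m\phi_{z_n-z_m}(z_n)$, all of modulus $\geq C$ by the near-orthogonality estimate from the proof of Lemma~\ref{L:1} (as $\|z_n-x_p\|\approx\lambda>1\approx\|z_i\|$); the strictly lower entries are the values $\phi_{z_n-x_p}(z_i)$ with $i<p$ (and the $\lim_m\phi_{z_n-z_m}(z_i)$), which are $<\vp$-small thanks to the far-out choices of $z_n$ and of the earlier $z_p$'s together with $c_k(\ell)\to0$; and every entry has modulus $\leq 2$. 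Hence $DF_n(0)^{T}\in A_{n\times n}(C,(\vp_i)_{i=2}^n)$, so $DF_n(0)$ is invertible with $\|DF_n(0)^{-1}\|_2\leq R_n$ by Lemma~\ref{L:UpperT}; also $\|DF_n(0)\|_2\leq 2n$, and since $\Phi$ is uniformly continuous on sets bounded away from $0$, $DF_n$ has a modulus of continuity on the cube bounded in terms of $n$ and the modulus of $\Phi$ alone.

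After rescaling $F_n$ to the Euclidean unit ball (replace $F_n(a)$ by $\widehat F_n(a):=F_n(\vp_n a)/\vp_n$, which has the same derivative at $0$), the translate $\widehat F_n-\widehat F_n(0)$ lies in $\cF_{(\delta(\cdot),R)}$ for an $R$ and a $\delta(\cdot)$ depending only on $n$, so Corollary~\ref{C:2} supplies $\eta_n>0$ with $\eta_nB^n\subset(\widehat F_n-\widehat F_n(0))(B^n)$. The base value $F_n(0)$ has first coordinate $\trivert(z_n,1)\trivert-\lambda=\lim_m\|z_n-z_m\|-\lambda=0$ and remaining coordinates $\|z_n-x_p\|-\lambda$; applying the far-out property with $w=x_p$ and the hypothesis $\lim_m\|x_p-z_m\|=\lambda$ gives $|\|z_n-x_p\|-\lambda|<\delta_n$, so $\|F_n(0)\|\leq\sqrt n\,\delta_n$. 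Since $\delta_n$ was prescribed it may be taken below $\vp_n\eta_n/\sqrt n$ — legitimate because $\eta_n$ and $\vp_n$ depend only on data fixed before $z_n$ is chosen — whence $\|\widehat F_n(0)\|<\eta_n$ and $-\widehat F_n(0)$ lies in the image, i.e.\ $F_n(a)=0$ for some $a$ in the cube. Setting $x_n:=v(a)$ gives $\|x_n-x_p\|=\lambda$ for all $p<n$, $\|x_n-z_n\|\leq n\vp_n$ tiny, and $\lim_m\|x_n-z_m\|=\lambda$, so the induction continues; thus $(x_i)$ satisfies $\|x_i-x_j\|=\lambda$ for all $i\neq j$.

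The hard part is the mutual compatibility of all the smallness constraints. The radius $\eta_n$ from Corollary~\ref{C:2} degrades with $n$ (it depends on the ambient dimension $n$, on $R_n$ from Lemma~\ref{L:UpperT}, and on the modulus of $DF_n$), whereas $F_n(0)$ a priori collects one error contribution from each of the $n-1$ earlier vectors. What neutralizes this is to build $x_n$ around a term $z_n$ taken far enough out and to carry the normalization $\lim_m\|x_n-z_m\|=\lambda$ along the induction (via the first coordinate of $F_n$ and Lemmas~\ref{L:smoothLimit}--\ref{L:extension}), so that every coordinate of $F_n(0)$ is forced below $\eta_n\vp_n$ purely by how far out $z_n$ sits, with no accumulation over $p<n$. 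The delicate point is to check that this far-out extraction can be arranged simultaneously with $DF_n(0)^{T}$ landing in Lemma~\ref{L:UpperT}'s matrix class for the \emph{fixed} $C$ and the \emph{fixed} $(\vp_i)$ — which is why, when the $p$-th term is chosen, one must also demand $\lim_m\phi_{z_p-z_m}(z_i)$ to be below a prescribed threshold for all $i<p$, possible precisely because $c_k(\ell)\to0$ — all while respecting $|a_{i,n}|<\vp_n$.
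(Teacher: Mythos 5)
Your proposal follows essentially the same route as the paper: stabilize a weakly null sequence via Lemmas~\ref{L:1}--\ref{L:2}, then inductively solve, at each stage $n$, a system of $n$ norm equations by a small perturbation, invoking Lemma~\ref{L:UpperT} to control the Jacobian and Corollary~\ref{C:2} to invert it. The one structural variant is that you perturb $z_n$ by $z_1,\dots,z_{n-1}$, whereas the paper perturbs a far-out $z_K$ by the already-constructed $x_1,\dots,x_N$; this changes the Jacobian entries from $\phi_{z_K-x_j}(x_i)$ to $\phi_{z_n-x_p}(z_i)$, but the triangular-plus-small structure is the same.

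There is, however, a genuine quantitative gap in how you control the sub-diagonal entries $\phi_{z_n-x_p}(z_i)$ for $i<p<n$, which Lemma~\ref{L:UpperT} requires to be $\le\vp_p$. Your far-out extraction makes $|\lim_m\phi_{z_p-z_m}(z_i)|$ small, but passing from $\phi_{z_n-z_p}$ to $\phi_{z_n-x_p}$ incurs an error of order $\omega(\|x_p-z_p\|)$, where $\omega$ is the modulus of continuity of the duality map $\Phi$. You place the stage-$p$ perturbation in the cube $[-\vp_p,\vp_p]^p$, so $\|x_p-z_p\|\le p\vp_p$, and you therefore implicitly need $\omega(p\vp_p)\lesssim\vp_p$. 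For a general uniformly smooth space $\omega$ is only guaranteed to satisfy $\omega(t)\to0$; if it is, say, H\"older of exponent $<1$ (or worse), then $\omega(p\vp_p)/\vp_p\to\infty$ as $\vp_p\to0$, and since the $\vp_p$ from Lemma~\ref{L:UpperT} are forced to be very small, the inequality can fail. Moreover your remark that ``shrinking the $\vp_i$ is harmless'' points in exactly the wrong direction: shrinking $\vp_p$ makes $\omega(p\vp_p)<\vp_p/4$ \emph{harder}, not easier. The paper sidesteps this by decoupling the perturbation radius from $\vp$: at stage $N+1$ it fixes $\vp_{N+1}$ first, then chooses $\delta_2$ so small that the $\Phi$-continuity error across the ball $\delta_2 B_{\R^{N+1}}$ is $<\vp_{N+1}/2$, and finally chooses the anchor $K$ far enough out that the target is reachable inside the (possibly tiny) image $f^K(\delta B)$. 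Your argument needs the same device: replace the cube $[-\vp_n,\vp_n]^n$ by $[-\tau_n,\tau_n]^n$ with $\tau_n$ chosen (before $z_n$) so small that $\omega(n\tau_n)<\vp_n/4$, and then take $\delta_n<\tau_n\eta_n/\sqrt n$. With that repair the rest of your reasoning goes through.
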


\begin{proof}

For all $x\in X\setminus \{0\}$, we let $\phi_x\in S_{X^*}$ be the
unique functional such that $\phi_x(x)=\|x\|$.  By Lemma \ref{L:2},
there exists a weakly null sequence $(z_i)_{i=1}^\infty\subset X$
such that $\lim_{i\rightarrow\infty}\|z_i\|=1$ and a constant $2>
\lambda>1$ such that $\lim_{i\rightarrow\infty}\|z_k- z_i\|=\lambda$
for all $k\in\N$ and
$\lim_{k\rightarrow\infty}\lim_{i\rightarrow\infty}\phi_{z_k-z_i}(z_\ell)=0$
for all $\ell\in\N$. We may thus assume that $\|z_k-
z_i\|>(1+\lambda)/2$ for all $i\neq k$ and $\|z_i\|<(3+\lambda)/4$
for all $i\in\N$. This gives us the following estimate for all
$i\neq k$.
\begin{equation}\label{E:lowerBound}
\phi_{z_k-z_i}(z_k)=\phi_{z_k-z_i}(z_k-z_i)+\phi_{z_k-z_i}(z_i)\geq
\|z_k-z_i\|-\|z_i\|>\frac{1+\lambda}{2}-\frac{3+\lambda}{4}=\frac{\lambda-1}{4}.
\end{equation}
We set $C=\frac{\lambda-1}{8}$, and thus we have that
$\phi_{z_k-z_i}(z_k)>2C>0$ for all $i\neq k$.  By Lemma
\ref{L:UpperT} there exists  $(R_N)_{N=1}^\infty\subset(0,\infty)$
and $(\vp_i)_{i=2}^\infty\subset(0,1)$ such that $\|A^{-1}\|\leq
R_N$ for all $A\in A_{N\times N}(C,(\vp_i)_{i=2}^N)$.  By induction
on $N\in\N$, we shall produce a sequence $(x_i)_{i=1}^\infty\subset
X$ and sequences of natural numbers $M_N=(m_i^N)_{i=1}^\infty$ with
$M_0=\N$ and $M_N$ a subsequence of $M_{N-1}$, for all $N\in\N$, so
that for all $N\in\N$, the following properties are satisfied.
\begin{enumerate}
\item $\|x_i-x_j\|=\lambda$ for all $1\leq i<j\leq N$,
\item $\lim_{i\rightarrow\infty}\|x_k- z_{m^N_i}\|=\lambda$
for all $1\leq k\leq N$,
\item $\|x_i\|\leq 2$ for all $1\leq i\leq N$,
\item $\lim_{\ell\rightarrow\infty}\lim_{k\rightarrow\infty}\phi_{z_{m^N_\ell}-z_{m^N_k}}(x_i)=0$
for all $1\leq i\leq N$,
\item $|\phi_{z_{L}-x_{k}}(x_i)|<\vp_k$
for all $1\leq i<k\leq N$ and $L\in M_N$.
\item $|\phi_{z_{L}-x_{k}}(x_k)|>C$
for all $1\leq k\leq N$ and $L\in M_N$.
\end{enumerate}
Note that if we are able to construct such a sequence
$(x_i)_{i=1}^\infty$ by induction, then $(x_i)_{i=1}^\infty$ would
be equilateral by condition $(1)$.  Thus, all we need to do to
complete the proof is to prove the induction argument.  Let $N=1$.
We let $x_1=z_1$ and $M_1=(2,3,4,...)$.  Conditions $(1)$ and $(5)$
are trivially satisfied.  Condition $(2)$, $(3)$, $(4)$, and $(6)$
are satisfied by our choice of $(z_i)_{i=1}^\infty$.

We now let $N\in\N$ and assume that we have constructed
$(x_i)_{i=1}^N$ and $M_N=(m^N_i)_{i=1}^\infty$ to satisfy conditions
$(1)$ through $(6)$.  For each $K\in M_N$, we define a map
$g^K:B_{\R^{N+1}}\rightarrow X$ by
$g^K(a_1,...,a_{N+1})=(1+a_{N+1})z_K+\sum_{i=1}^{N}a_i x_i$.
 Our first goal is to show that there exists $\delta>0$ and a subsequence $M'_N$ of $M_N$ such that
if we set $x_{N+1}=g^K(a)$ for some $a\in \delta B_{\R^{N+1}}$ and
$K\in M'_N$, and if $M_{N+1}$ is some subsequence of $\{L\in M'_N|
L>K\}$, then properties $(3)$, $(4)$, $(5)$, and $(6)$ would all
hold.

As $\|z_K\|\leq (3+\lambda)/4<2$ for all $K\in\N$, we may choose
$\delta_1>0$ such that $\|g^K(a)\|\leq 2$ for all $a\in \delta_1
B_{\R^{N+1}}$ and $K\in M_1$.  Thus, if $x_{N+1}=g^K(a)$ for some
$a\in \delta_1 B_{\R^{N+1}}$ and $K\in M_N$ then $\|x_{N+1}\|\leq 2$
and hence property $(3)$ in the induction hypothesis would be
satisfied.

For each $K\in\N$, we have that
$$\lim_{\ell\rightarrow\infty}\lim_{k\rightarrow\infty}\phi_{z_{m^N_\ell}-z_{m^N_k}}(g^K(a))=\lim_{\ell\rightarrow\infty}\lim_{k\rightarrow\infty}(1+a_{N+1})\phi_{z_{m^N_\ell}-z_{m^N_k}}(z_K)+\sum_{i=1}^{N}a_i
\phi_{z_{m^N_\ell}-z_{m^N_k}}(x_i)=0,
$$
as
$\lim_{k\rightarrow\infty}\lim_{i\rightarrow\infty}\phi_{z_k-z_i}(z_\ell)=0$
for all $\ell\in\N$ and
$\lim_{\ell\rightarrow\infty}\lim_{k\rightarrow\infty}\phi_{z_{m^N_\ell}-z_{m^N_k}}(x_i)=0$
for all $1\leq i\leq N$.  Thus, if $x_{N+1}=g^K(a)$ for some $a\in
\delta_1 B_{\R^{N+1}}$ and $K\in M_N$ then property $(4)$ in the
induction hypothesis would be satisfied.

By $(4)$, there exists a subsequence $M'_N=(m'^N_j)_{j=1}^\infty$ of
$M_N$ such that $|\phi_{z_{L}-z_{K}}(x_i)|\leq \vp_{N+1}/2$ for all
$1\leq i\leq N$ and $L,K\in M'_N$ with $L>K$. The set $(g^K)_{K\in
M'_N}$ is equicontinuous on $\delta_1 B_{\R^{N+1}}$, $g^K(0)=z_K$
for all $K\in M'_N$, and the map $x\mapsto \phi_x$ is uniformly
continuous on $X\setminus \vp B_X$ for all $\vp>0$.  Thus, there
exists $\delta_2>0$ with $\delta_2<\delta_1$ such that
$|\phi_{z_{L}-g^{K}(a)}(x_i)|<\vp_{N+1}$ for all $1\leq i\leq N$ and
$K,L\in M'_N$ with $L>K$ and all $a\in \delta_2 B_{\R^{N+1}}$. Thus,
if $x_{N+1}=g^K(a)$ for some $a\in \delta_2 B_{\R^{N+1}}$ and $K\in
M'_N$ then property $(5)$ in the induction hypothesis would be
satisfied for all $L\in M'_N$ with $L>K$.  Similarly, after passing
to a further subsequence of $M'_N$, we may assume that there exists
$\delta>0$ with $\delta<\delta_2$ such that if $x_{N+1}=g^K(a)$ for
some $a\in \delta B_{\R^{N+1}}$ and $K\in M'_N$ then property $(6)$
in the induction hypothesis would be satisfied for all $L\in M'_N$
with $L>K$.  Thus, if we set $x_{N+1}=g^K(a)$ for some $a\in \delta
B_{\R^{N+1}}$ and $K\in M'_N$, and if $M_{N+1}$ is some subsequence
of $\{L\in M'_N| L>K\}$, then properties $(3)$, $(4)$, $(5)$, and
$(6)$ would all hold.

Our next step is to show that we may choose $a\in \delta
B_{\R^{N+1}}$, $K\in M'_N$, and a subsequence $M_{N+1}$ of $\{L\in
M'_N| L>K\}$ such that properties $(1)$ and $(2)$ hold for
$x_{N+1}=g^K(a)$. For each $K\in M_N$ we define a map
$f:\R^{N+1}\rightarrow \R^{N+1}$ by
$$f^K(a)=\left(\|g^K(a)-x_1\|,...,\|g^K(a)-x_N\|,\lim_{j\rightarrow\infty}\|g^K(a)-z_{m'^N_j}\|\right)
\quad\textrm{for all }a\in\R^{N+1}.$$

The derivative of $f$ at $0$, is given by
$Df^K(0)=\left[\frac{\partial f^K_j}{\partial
a_n}|_{a=0}\right]_{1\leq j,n\leq N+1}$. For $1\leq j,n\leq N$,
\begin{equation}\label{E:D1}
\left.\frac{\partial f^K_j}{\partial
a_n}\right|_{a=0}=\left.\frac{\partial}{\partial a_n}\right|_{a=0}
\left\|(1+a_{N+1})z_K+\sum_{i=1}^{N}a_i
x_i-x_j\right\|=\phi_{z_K-x_j}(x_n).\end{equation}
 For $1\leq n\leq N$, we have by
Lemma \ref{L:extension}
\begin{equation}\label{E:D2}
\left.\frac{\partial f^K_{N+1}}{\partial
a_n}\right|_{a=0}=\left.\frac{\partial}{\partial a_n}\right|_{a=0}
\lim_{j\rightarrow\infty}\left\|(1+a_{N+1})z_K+\sum_{i=1}^{N}a_i
x_i-z_{m'^N_j}\right\|=\lim_{j\rightarrow\infty}
\phi_{z_K-z_{m'^N_j}}(x_n).\end{equation}

For $1\leq j\leq N$, we have
\begin{equation}\label{E:D3}
\left.\frac{\partial f^K_{j}}{\partial
a_{N+1}}\right|_{a=0}=\left.\frac{\partial}{\partial
a_{N+1}}\right|_{a=0} \left\|(1+a_{N+1})z_K+\sum_{i=1}^{N}a_i
x_i-x_j\right\|=\phi_{z_K-x_j}(z_K).\end{equation}

By Lemma \ref{L:extension},
\begin{equation}\label{E:D4}
\left.\frac{\partial f^K_{N+1}}{\partial
a_{N+1}}\right|_{a=0}=\left.\frac{\partial}{\partial
a_{N+1}}\right|_{a=0}
\lim_{j\rightarrow\infty}\left\|(1+a_{N+1})z_K+\sum_{i=1}^{N}a_i
x_i-z_{m'^N_j}\right\|=\lim_{j\rightarrow\infty}
\phi_{z_K-z_{m'^N_j}}(z_K).\end{equation}

We first note that equations \eqref{E:D1}, \eqref{E:D2},
\eqref{E:D3}, and \eqref{E:D4} imply that $\left|\frac{\partial
f^K_{j}}{\partial a_n}|_{a=0}\right|\leq 2$ for all $1\leq j,n\leq
N+1$ and $K\in M_N$ as $\|x_n\|\leq 2$ for all $1\leq n\leq N$ and
$\|z_K\|\leq 2$ for all $K\in M'_N$. By equation \eqref{E:D1} and
property $(5)$, we have that $\left|\frac{\partial f^K_j}{\partial
a_n}|_{a=0}\right|=\left|\phi_{z_K-x_j}(x_n)\right|<\vp_j$ for all
$K\in M_N$ and $1\leq n<j\leq N$. By equation \eqref{E:D2} we have
that $\frac{\partial f^K_{N+1}}{\partial a_n}|_{a=0}=
\lim_{j\rightarrow\infty} \phi_{z_K-z_{m'^N_j}}(x_n)$. Thus, by
property $(4)$, there exists $K_1\in M'_N$ such that
$\left|\frac{\partial f^K_{N+1}}{\partial
a_n}|_{a=0}\right|<\vp_{N+1}$ for all $1\leq n\leq N$ and all $K\in
M'_N$ with $K\geq K_1$. By equation \eqref{E:D1} and property $(6)$,
we have that $\left|\frac{\partial f^K_j}{\partial
a_j}|_{a=0}\right|=|\phi_{z_K-x_j}(x_j)|>C$ for all $K\in M'_N$ and
all $1\leq j\leq N$. By equation \eqref{E:D4}, we have that
$\left|\frac{\partial f^K_{N+1}}{\partial a_{N=1}}|_{a=0}\right|=
\left|\lim_{j\rightarrow\infty} \phi_{z_K-z_j}(z_K)\right|\geq
\frac{\lambda-1}{4}>C$.  Thus, we have that
 $Df^K(0)\in A_{(N+1)\times (N+1)}(C,(\vp_i)_{i=2}^{N+1})$ and hence
$\|(Df^K(0))^{-1}\|\leq R_{N+1}$ for all $K\in M'_N$ with $K\geq
K_1$.

Due to property $(2)$ and $\lim_{i\rightarrow\infty}\|
z_k-z_i\|=\lambda$ for all $k\in\N$, we have that
$\lim_{j\rightarrow \infty}f^{m'^N_j}(0)=(\lambda,...,\lambda)$.  As
$\|(Df^{K}(0))^{-1}\|\leq R_{N+1}$ for all $K\in M'_N$ with $K\geq
K_1$, we may apply Corollary \ref{C:2} to obtain an integer $K\in
M'_N$ with $K\geq K_1$ such that $(\lambda,...,\lambda)\in
f^{K}(\delta B_{N+1})$.  Thus, there exists $a\in \delta
B_{\R^{N+1}}$ such that $f^K(a)=(\lambda,...,\lambda)$.  We set
$x_{N+1}=g^K(a)$ and $M_{N+1}=\{L\in M'_N\,|\, L>K\}$.  As noted
earlier, this choice of $x_{N+1}$ and $M_{N+1}$ satisfies properties
$(3)$, $(4)$, $(5)$, and $(6)$ in the induction hypothesis.
Furthermore, we have that
$$\left\|x_{N+1}-x_j\right\|=\left\|(1+a_{N+1})z_K
+\sum_{i=1}^N a_i x_i-x_j\right\|=\lambda\quad \textrm{ for all
}1\leq i\leq N,$$
 thus satisfying property $(1)$.  We have that
$$\lim_{j\rightarrow\infty}\|x_{N+1}-z_{m^{N+1}_j}\|=\lim_{j\rightarrow\infty}\left\|(1+a_{N+1})z_K
+\sum_{i=1}^N a_i x_i-z_{m^{N+1}_j}\right\|=\lambda,$$ thus
satisfying property $(2)$ in the induction hypothesis.  We have
satisfied all properties in our induction hypothesis, and hence we
obtain a sequence $(x_i)_{i=1}^\infty\subset X$ by induction which
satisfies $\|x_i-x_j\|=\lambda>0$ for all $i\neq j$.

\end{proof}

\end{document}